\newtheorem{trm}{Theorem}[section]
\newtheorem{prop}{Proposition}[section]
\newtheorem{oss}{Remark}[section]
\def \R{\mbox{${\mathbb R}$}}
\def \h{\mbox{$\mathbb H$}}
\def \s{\mbox{${\mathbb S}$}}
\begin{document}
\title{On the three-dimensional homogeneous \\
SO(2) - isotropic Riemannian manifolds}
\author{ P. Piu and M. M. Profir }

\date{}

\maketitle
\begin{abstract}
In this paper we consider some properties of the three-dimensional homogeneous  SO(2)-isotropic
Riemannian manifolds. In particular, we determine the geodesics, the
totally geodesic surfaces, the totally umbilical surfaces and the
geodesics of the rotational surfaces.
\end{abstract}
\section{Introduction}
 We consider a two-parameter family of
three-dimensional Riemannian mani\-folds $(M,ds^{2}_{\ell,m})$,
where the metrics have the expression
\begin{equation}\label{eq:Cartan-Vranceanu}
  ds^{2}_{\ell,m} =\frac{dx^{2} + dy^{2}}{[1 + m(x^{2} + y^{2})]^{2}} +
\left(dz + \frac{\ell}{2} \frac{ydx - xdy}{[1 + m(x^{2} +
y^{2})]}\right)^{2},
\end{equation}
with $\ell,m \in {\R}$. The underlying differentiable manifolds
$M$ are $\R ^3$ if $m\geq 0$ and $M=\{(x,y,z)\in\R^3\;:\; x^{2} +
y^{2} < - \frac{1}{m}\}$ otherwise.
 \noindent These metrics can be found in the classification of
$3$-dimensional homogeneous metrics given by L. Bianchi in 1897
(see \cite{[Bi]}); later, they appeared in form
(\ref{eq:Cartan-Vranceanu}) in \'{E}. Cartan (\cite{[Ca]}) and G.
Vranceanu (see \cite{[Vr]}). For these reasons we call them the
Cartan-Vranceanu metrics (\textbf{C-V metrics}). Their geometric
interest lies in the following: {\it the family of metrics
(\ref{eq:Cartan-Vranceanu}) includes all $3$-dimensional
homogeneous metrics whose group of isometries has dimension $4$ or
$6$, except for those of constant negative sectional curvature}.
We recall the spaces that correspond to the different values of $\ell$
and $m$.
\begin{enumerate}
\item[\bf $\bullet$] If $\ell=0$, then $M$ is the product of a surface
$S$  with constant Gaussian curvature $4m$ and the real line $\R$.
\item[\bf $\bullet$]If  $ 4m - \ell^2 = 0 $, then
$M$ has nonnegative constant sectional curvature.
\item[\bf $\bullet$] If $\ell \neq 0$ and $m>0$, $M$ is
locally  $SU(2)$.
\item[\bf $\bullet$]Similarly, if  $\ell \neq 0$ and $m < 0$, $M$ is
locally $\widetilde{SL}(2,\R)$, while if
\item[\bf $\bullet$] $ m = 0$ and $\ell \neq 0$
we get a left invariant metric on the Heisenberg Lie group
$\h_{3}$.
\end{enumerate}

The isometry group of these spaces
 has a subgroup isomorphic
to the group $SO(2)$, so there are rotational surfaces around $z$-axis.
R. Caddeo, P. Piu, A. Ratto and P. Tomter (see \cite{[CPR$_1$]},
\cite{[CPR$_2$]}, \cite{[To]}) have studied rotational surfaces in
$\h_{3}$ with constant (mean or Gauss) curvature, while the  CMC and CGC invariant surfaces of  $\h_3$ and of
the product space $\h^2 \times \R$ have been studied by C. Figueroa, F. Mercuri, R. Pedrosa,  S. Montaldo and I. Onnis (see \cite{[FMP]},\cite{[MO1]}, \cite{[MO]}). In this paper we obtain the
Lie algebra of the Killing vector fields and thus the group of
isometries for the C-V metrics. We explicitly determine  the equations of the
geodesics by using the Killing vector fields and  obtain
the equations of the surfaces which contain the geodesics. After
having determined the totally geodesic surfaces isometrically
immersed in the C-V spaces, we study the totally umbilical
surfaces of these spaces, proving that the only totally
umbilical surfaces are totally geodesic. We find the geodesics
for the $SO(2)$-invariant surfaces of the Cartan-Vranceanu spaces,
deduce the conditions that meridians and parallels must satisfy in
order to be geodesics and show the analogies with the Euclidean
case.

\section{Geodesics on C-V spaces}
It is well known that a curve  $\gamma:I\rightarrow M$ on a
Riemannian manifold $(M, g,\nabla)$ with the Levi-Civita
connection $\nabla$ is a {\bf geodesic} if its velocity
vector field is constant (parallel),
\begin{equation}
\nabla_{\dot{\gamma}}\dot{\gamma}=0.
\end{equation}
We also remember an important theorem of Levi - Civita:
\begin{trm}\label{Levi-Civita}
If $X$ is a  Killing vector field for the Riemannian manifold
$(M,g)$ then the equation of the geodesics $\gamma(t)$ admits the
prime integral
\[
g(\dot{\gamma}, X ) = const.
\]
\end{trm}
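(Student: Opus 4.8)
The plan is to show that the function $t \mapsto g(\dot{\gamma}(t), X_{\gamma(t)})$ is constant along any geodesic $\gamma$ by differentiating it and checking that the derivative vanishes identically. First I would record the observation that the Killing condition $\mathcal{L}_X g = 0$ is equivalent, via the compatibility of $\nabla$ with $g$ and the vanishing of its torsion, to the skew-symmetry relation
\[
g(\nabla_Y X, Z) + g(\nabla_Z X, Y) = 0 \qquad \text{for all vector fields } Y, Z,
\]
that is, the bundle endomorphism $Y \mapsto \nabla_Y X$ is antisymmetric with respect to $g$. This reformulation is the form of the hypothesis that will actually be used.

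Next, regarding $X$ as a vector field along $\gamma$ (its restriction to the curve, so that $\nabla_{\dot{\gamma}} X$ makes sense) and using once more that $\nabla$ is a metric connection, I would compute
\[
\frac{d}{dt}\, g(\dot{\gamma}, X) = g(\nabla_{\dot{\gamma}}\dot{\gamma}, X) + g(\dot{\gamma}, \nabla_{\dot{\gamma}} X).
\]
The first term on the right-hand side vanishes because $\gamma$ is a geodesic, $\nabla_{\dot{\gamma}}\dot{\gamma} = 0$. For the second term, I would apply the skew-symmetry relation above with $Y = Z = \dot{\gamma}$, which yields $2\, g(\dot{\gamma}, \nabla_{\dot{\gamma}} X) = 0$, hence $g(\dot{\gamma}, \nabla_{\dot{\gamma}} X) = 0$. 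Therefore $\frac{d}{dt}\, g(\dot{\gamma}, X) = 0$ along $\gamma$, so $g(\dot{\gamma}, X)$ is constant, which is exactly the asserted prime integral.

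There is no genuine obstacle in this argument: the only steps requiring a little care are the passage between the Lie-derivative form of the Killing equation and its covariant form, and the bookkeeping needed to treat $X$ as a field along $\gamma$ so that the covariant derivative $\nabla_{\dot{\gamma}} X$ is well defined. Both are entirely standard, so I would present the proof essentially as the short computation displayed above, preceded by the one-line derivation of the skew-symmetry identity from $\mathcal{L}_X g = 0$.
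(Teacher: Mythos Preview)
Your proof is correct and follows essentially the same route as the paper: differentiate $g(\dot\gamma,X)$, split into the two terms, and use the geodesic equation for the first and the Killing condition for the second. The only difference is that you spell out the skew-symmetry identity $g(\nabla_Y X,Z)+g(\nabla_Z X,Y)=0$ to justify why $g(\dot\gamma,\nabla_{\dot\gamma}X)=0$, whereas the paper simply asserts that this term vanishes because $X$ is Killing.
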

\begin{proof}
The derivative with respect to $t$ of the scalar product
$g(\dot{\gamma},X)=\varphi(t)$
 gives
$$
\frac{d}{dt}g(\dot{\gamma},X)=g(\nabla_{\dot{\gamma}}\dot{\gamma},X)+g(\dot{\gamma},\nabla_{\dot{\gamma}}X)
$$
and this is zero, because $\gamma$ is a geodesic and
  $X$ a Killing vector field. Thus we have
  $\varphi=const$.
\end{proof}

We want to obtain the geodesics for the simply connected
homogeneous $SO(2)$-isotropic $3$-dimensional Riemannian manifolds
with isometry group of dimension $4$, endowed with the C-V
metrics.\\
 The Cartan-Vranceanu metric \eqref{eq:Cartan-Vranceanu} can be written as:
 \begin{equation}\label{1.2}
 ds^{2}_{\ell,m} =\sum_{i=1}^3 \omega^i \otimes \omega^i,
\end{equation}
 where, putting $D = 1+m(x^2 + y^2)$,
\begin{equation}\label{forme}
\omega^1 = \frac{dx}{D} \hspace{1 cm}
\omega^2 = \frac{dy}{D} \hspace{1 cm} \omega^3 = dz + \frac{\ell}{2} \frac{ydx - xdy}{D}.
\end{equation}
The orthonormal basis of vector fields dual to the $1$-forms \eqref{forme} is

\begin{equation}\label{baseorto}
E_1 = D \frac{\partial}{\partial x} - \frac{\ell}{2} y \frac{\partial}{\partial z} \qquad
E_2 = D \frac{\partial}{\partial y} +\frac{\ell}{2} x \frac{\partial}{\partial z}  \qquad
E_3 = \frac{\partial}{\partial z}.
\end{equation}
 The Killing vector fields of the metric (\ref{eq:Cartan-Vranceanu})  are the vector fields $X=\xi^{i} E_{i}$ such that the Lie derivative with respect to $X$ of the metric is zero
\[
L_{X}(ds^{2}_{\ell,m})  = 0.
\]
A basis for the Lie algebra of Killing vector fields has been
computed (see  \cite{[P]}) and we found that it is given by
\begin{align}
X &= \frac{2 m xy}{D}E_1 + \left(1 - \frac{2 m x^2}{D}\right)E_2 - \frac{\ell x}{D}E_3\notag\\
Y&= \left(1 - \frac{2 m y^2}{D}\right)E_1 + \frac{2 m xy}{D}E_2 +  \frac{\ell y}{D}E_3\notag\\
Z&= E_3 \notag\\
R &=  - \frac{y}{D}E_1 + \frac{x}{D}E_2 - \frac{\ell(x^2 + y^2)}{2D}E_3 \notag.
\end{align}
\vspace{.5 cm}
 Let $\gamma: I \rightarrow M$ be a geodesic on the
manifold $(M,ds^{2}_{\ell,m})$. The tangent vector field
$\dot{\gamma}$ with respect to the orthonormal basis
\eqref{baseorto} is
\[
\dot{\gamma} = \frac{\dot{x}}{D} E_1 + \frac{\dot{y}}{D}E_2  +
\left[\dot{z} - \frac{\ell}{2} \frac{x \dot{y} -  y
\dot{x}}{D}\right] E_3.
\]
 \vspace{.5 cm}
According to Theorem \ref{Levi-Civita} we can write four
prime integrals
\[
\begin{cases} \displaystyle\frac{ 2 m xy\dot{x}}{D^2} +
\displaystyle\frac{(1 + m ( y^2 - x^2)) \dot{y}}
{D^2} - a_3  \displaystyle\frac{\ell x}{D}  =   a_1\\
\\
 \displaystyle
 \frac{( 1 + m (x^2 - y^2))\dot{x}}{D^2} +
 \frac{ 2 m xy\dot{y}}{D^2} + a_3 \frac{\ell y}{D} =  a_2 \\
 \\
\dot{z} -  \displaystyle
\frac{\ell}{2} \frac{x \dot{y} -  y \dot{x}}{D} = a_3 \\
\\
 \displaystyle
 \frac{\dot{y} x}{D^2} -
  \displaystyle
 \frac{\dot{x} y}{D^2}- a_3
\frac{\ell(x^2 + y^2)}{2 D}  = a_4
\end{cases}
\qquad a_i \in \R.
\]

\begin{oss}
As here we are considering homogeneous riemannian spaces, in order to obtain all the
geodesics starting at a point $p$ it is sufficient to translate the
geodesics starting at the origin of the coordinate system by using the isometry that takes $0$ to $p$.
\end{oss}
Considering thus the geodesics starting at origin such that
$\dot{\gamma}(0)=(u,v,w)$, the constants $a_{i}$ take the values
\[
a_{1}=v \qquad  a_{2}=u \qquad a_{3}=w \qquad a_{4}=0
\]
and the prime integrals become
\begin{equation}\label{integrali-primi}
\begin{cases}
\displaystyle\frac{ 2 m xy\dot{x}}{D^2} +
\displaystyle\frac{(1 + m ( y^2 - x^2)) \dot{y}}
{D^2} - \displaystyle\frac{\ell x w}{D}  = v\\
\\
\displaystyle \frac{( 1 + m (x^2 - y^2))\dot{x}}{D^2} +
 \displaystyle\frac{ 2 m xy\dot{y}}{D^2}  + \displaystyle \frac{\ell yw}{D} = u \\
 \\
\dot{z} -  \displaystyle\frac{\ell}{2}\displaystyle \frac{x \dot{y} -  y \dot{x}}{D} = w \\
\\
 \displaystyle \frac{\dot{y} x-\dot{x} y}{D^2}
   -
\displaystyle\frac{\ell(x^2 + y^2)w}{2 D}  = 0.
\end{cases}
\end{equation}

 We observe that the Killing vector field $R$ that
generates the rotations around the $z$-axis, may be written as a
combination of the other Killing vector fields:
\[
R = \frac{- y}{ m(x^2 + y^2) - 1} X + \frac{x}{m(x^2 + y^2) -1} Y
- \frac{\ell(x^2 + y^2)}{2(m(x^2 + y^2) -1 )} Z .
\]
Theorem \ref{Levi-Civita}, applied to $R$, gives the
following prime integral
 \[
\frac{- y}{m(x^2 + y^2)-1} a_1 + \frac{x}{m(x^2 + y^2)-1} a_2 -
\frac{\ell(x^2 + y^2)}{2(m(x^2 + y^2)-1)} a_3 = 0,
\]
equivalent to the equation
  \begin{equation}\label{cilindro}
\ell(x^2 + y^2) w - 2 x v + 2y u = 0.
\end{equation}
Then we have
\begin{prop}\label{prop-cilindro}
 For the C-V metrics, the geodesics $\gamma(t)$ starting at the origin and such  that $\dot{\gamma}(0) = (u,v,w)$ can be defined as the intersection
 of two surfaces:
\begin{itemize}
\item  a circular cylinder with the generating line (ruling)  parallel to the $z$-axis or a plane parallel to the $z$-axis;
\item a surface of rotation around the $z$-axis.
\end{itemize}
\end{prop}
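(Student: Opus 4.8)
The plan is to extract both surfaces directly from the four prime integrals \eqref{integrali-primi}, after passing to the cylindrical coordinates $(r,\theta,z)$ adapted to the $SO(2)$-action, namely $x = r\cos\theta$, $y = r\sin\theta$, so that $x^2+y^2 = r^2$ and $D = 1+mr^2$.

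First I would dispose of the first bullet. The equation \eqref{cilindro}, $\ell w(x^2+y^2) - 2vx + 2uy = 0$, does not involve $z$, so its zero set in $\R^3$ is a generalized cylinder with rulings parallel to the $z$-axis. If $\ell w \neq 0$, completing the square rewrites it as $\bigl(x - \tfrac{v}{\ell w}\bigr)^2 + \bigl(y + \tfrac{u}{\ell w}\bigr)^2 = \tfrac{u^2+v^2}{\ell^2 w^2}$, a circular cylinder, necessarily through the origin. If $\ell w = 0$ it collapses to the linear equation $uy - vx = 0$, i.e.\ a plane through the $z$-axis (the whole space only in the trivial case $u=v=0$, where $\gamma$ is the $z$-axis). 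This is exactly the dichotomy in the first item.

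For the second bullet I would rewrite the last two prime integrals in polar form. Using $x\dot y - y\dot x = r^2\dot\theta$, the fourth integral of \eqref{integrali-primi} becomes $\dot\theta = \tfrac{\ell w}{2}D$ (for $r\neq 0$), the third then becomes $\dot z = w\bigl(1 + \tfrac{\ell^2}{4}r^2\bigr)$, and the constant-speed identity $|\dot\gamma|^2 \equiv u^2+v^2+w^2$ (equivalently, the first two prime integrals squared and added) gives $\dot r^2 = D^2\bigl(u^2+v^2 - \tfrac{\ell^2 w^2}{4}r^2\bigr)$. The point is that the right-hand sides of the last two equations depend only on $r$ (and on the fixed constants $\ell,m,u,v,w$): hence the plane curve $c(t) := (r(t),z(t))$ in the half-plane $\{r\geq 0\}$ does not depend on $\theta$, and where $\dot r\neq 0$ it is the graph of a function $z = \psi(r)$ obtained by integrating $dz/dr = \dot z/\dot r$. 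The surface $\Sigma$ obtained by revolving $c$ about the $z$-axis is then a surface of rotation, and $\gamma(t) = \bigl(r(t)\cos\theta(t),\,r(t)\sin\theta(t),\,z(t)\bigr)$ lies on $\Sigma$, being the orbit of $c$ under the rotations. Since $\gamma$ also lies on the cylinder (or plane) of the previous paragraph, and the two surfaces meet transversally along $\gamma$ outside the degenerate vertical case, $\gamma$ is recovered as the connected component through the origin of their intersection.

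I expect the only real friction to be the bookkeeping of degenerate sub-cases — $\ell w = 0$, $w = 0$, $u=v=0$ — in which the ``cylinder'' becomes a plane and the ``surface of rotation'' degenerates to a plane through the axis, to the plane $z=0$, or to the $z$-axis itself; these must be spelled out in order to justify the ``or a plane parallel to the $z$-axis'' clause. The rest is the short computation of the two ordinary differential equations for $r$ and $z$ from the prime integrals, which involves no estimates.
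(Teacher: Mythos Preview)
Your argument is correct and, for the first bullet, identical to the paper's: both read the cylinder/plane dichotomy straight off equation~\eqref{cilindro} by the case split $\ell w\neq 0$ versus $\ell w=0$. For the second bullet the paper is far terser than you are: it simply declares that ``the rotational surface is obtained by rotating a geodesic around the $z$-axis'', which is almost tautological (any curve lies on its own surface of revolution). Your derivation of the autonomous pair $\dot z = w\bigl(1+\tfrac{\ell^2}{4}r^2\bigr)$, $\dot r^2 = D^2\bigl(u^2+v^2-\tfrac{\ell^2 w^2}{4}r^2\bigr)$ from the last two prime integrals is exactly what the paper does \emph{after} the proposition, in system~\eqref{integraliprimi}, when it actually integrates the geodesics; you are effectively front-loading that computation to make the profile curve explicit and to justify that the intersection genuinely recovers $\gamma$ (a point the paper's proof does not address). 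So the route is the same, but your version is more self-contained.
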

\begin{proof}
According to equation \eqref{cilindro}, for $\ell \neq 0$ and
$w \neq 0$ we find that the geodesics of the Heisenberg group $\h_3$,
of the Lie group $SU(2)$ and those of the universal
covering of $SL(2,\R)$ are contained in a {\it cylinder} at origin
with generating lines
parallel to the $z$-axis; for $\ell \neq 0$ and $w =0$ the geodesics  are contained in the plane $vx - u y= 0$ parallel to the $z$-axis.\\
For $\ell = 0$ we obtain that the geodesics of the product spaces $\s^2
\times \R$ and $\h^2 \times \R$
 are contained in the plane $vx - u y= 0$.\\
The rotational surface is obtained by rotating a geodesic
around the $z$-axis.
\end{proof}
We shall describe briefly the method of finding the equations of
geodesics taking into consideration the case $m\neq
0$ and $ \ell \neq 0$, when the family of metrics
\eqref{eq:Cartan-Vranceanu} gives a metric of the spaces $SU(2)$
and $\widetilde{SL}(2,\R)$. It is convenient to write the integrals in
\eqref{integrali-primi} in cylindrical coordinates.
Considering the geodesic starting at origin and tangent  at
the vector $(u,v,w)$, the prime integrals of the equation of the
geodesics and the unit norm of the vector $\dot{\gamma}$ give
the system

\begin{equation}\label{integraliprimi}
\begin{cases}
\displaystyle\frac{\dot{\rho} \cos \theta}{1 + m\rho^{2}} - \frac{ \dot{\theta} \sin \theta}{(1+m\rho^{2})^{2}}(\rho - m \rho^3) +\frac{\ell \rho \sin \theta}{1+m\rho^{2}}w=u\\
\mbox{}\\
\displaystyle\frac{\dot{\rho} \sin \theta}{1 + m\rho^{2}} + \frac{ \dot{\theta} \cos \theta}{(1+m\rho^{2})^{2}}(\rho - m \rho^3) -\frac{\ell \rho \cos \theta}{1+m\rho^{2}}w=v\\
\mbox{}\\
\displaystyle\dot{z}-\frac{l}{2}\frac{\rho^{2}\dot{\theta}}{1+m\rho^{2}}=w\\
\mbox{}\\
\displaystyle\frac{\rho^{2}\dot{\theta}}{(1+m\rho^{2})^{2}}-\frac{l}{2}\frac{\rho^{2}w}{1+m\rho^{2}}=0\\
\mbox{}\\
\displaystyle\frac{\dot{\rho}^{2}+\rho^{2}\dot{\theta}^{2}}{(1+m\rho^{2})^{2}}=u^{2}+v^{2}
\end{cases}
\end{equation}
For $w = 0$, the system
\eqref{integraliprimi} becomes
\[
\begin{cases}
\dot{\rho} \cos \theta
= u ({1 + m\rho^{2}})
\\
\dot{\rho} \sin \theta
= v ({1 + m\rho^{2}})\\
\displaystyle\dot{z}
= 0\\
\dot{\theta}
= 0\\
\displaystyle\frac{\dot{\rho}^{2}
}{(1+m\rho^{2})^{2}}
= u^{2}+v^{2}
\end{cases}
 \qquad \Longrightarrow \qquad
\begin{cases}
\dot{\rho} \cos \theta
 = u ({1 + m\rho^{2}})\\
 \dot{\rho} \sin \theta= v ({1 + m\rho^{2}})\\
z  = 0 \\
\theta = a\\
\\
\displaystyle\frac{\dot{\rho}}{(1+m\rho^{2})} = \pm
\sqrt{u^{2}+v^{2}}.
\end{cases}
\]
The immediate integration of the last equation gives the equations
of the geodesics for the spaces $SU(2)$ and
$\widetilde{SL}(2,\R)$, respectively :
\[
\begin{cases}
x =\displaystyle \frac{u}{\sqrt{u^2 + v^2}} \frac{\tan (\sqrt{m(u^2 + v^2)} t)}{\sqrt{m(u^2 + v^2)}} \\
\\
y = \displaystyle \frac{v}{\sqrt{u^2 + v^2}}\frac{\tan (\sqrt{m(u^2 + v^2)} t)}{\sqrt{m(u^2 + v^2)}}  \\
\\
z = 0
\end{cases}
\quad
\begin{cases}
x =\displaystyle  \frac{u}{\sqrt{u^2 + v^2}} \frac{\tanh (\sqrt{-m(u^2 + v^2)} t)}{\sqrt{-m(u^2 + v^2)}} \\
\\
y =\displaystyle \frac{v}{\sqrt{u^2 + v^2}}\frac{\tanh (\sqrt{- m(u^2 + v^2)} t)}{\sqrt{- m(u^2 + v^2)}}  \\
\\
z = 0
\end{cases}
\]

 If $w\neq 0$, from the last  two equations of
system \eqref{integraliprimi} we have
$$
d{\theta}=\frac{\ell w}{2}(1+m\rho^{2}) dt, \qquad \quad \frac{d\rho}{(1+m\rho^{2})\sqrt{(u^{2}+v^{2})-\frac{\ell^{2}}{4}w^{2}\rho^{2}}}=\pm
dt.
$$

Now we put $\frac{\ell^{2}}{4}w^{2}=a^{2},\quad u^{2}+v^{2}=b^{2}$ and $a^2 + b^2m \neq 0$.
Then by integrating we obtain
\begin{equation*}\label{geo-SU2}
\rho^2 = \frac{b^2 \tan At}{A^2 + a^2 \tan At}\qquad \quad
\theta = \arctan \frac{\ell w \tan At}{A},
\end{equation*}
where $A=\sqrt{\ell^2  w^2 + 4 m (u^2 + v^2) }$.
We shall give a list of all geodesics obtained together with
their graphical representation.
\begin{itemize}
\item For $\ell \neq 0$  and $\ell^2  w^2 + 4 m (u^2 + v^2) > 0$,  we have the following equations:
\[
\begin{cases}
x = \displaystyle\frac{2 \tan(\frac{At}{2})}{\sqrt{A^{2}+ \ell^2 w^2 \tan^{2}(\frac{At}{2})}} \left(u \cos T - v \sin T\right)\\
\mbox{}\\
y = \displaystyle\frac{2 \tan(\frac{At}{2})}{\sqrt{A^{2}+ \ell^2 w^2 \tan^{2}(\frac{At}{2})}} \left(v \cos T + u \sin T\right)\\
\\
z=wt-\displaystyle\frac{\ell^{2}w}{4m}t - \frac{\ell w}{2 m} T,\\
\end{cases}
\]
with $T=\arctan\frac{\ell
w\tan(\frac{A t}{2})}{A} $.

If $m >0$ and $4 m \neq \ell^2$ these equations determine the geodesics of
$SU(2)$,  while if $4 m = \ell^2$ we have the geodesics of the
sfere $\s^3$.
If $m < 0$ these equations determine the geodesics of
$\widetilde{SL}(2,\R)$.

\begin{figure}[h!]
\begin{center}
    \begin{minipage}[b]{2 in}
    \epsfxsize=0.65 in
    \centerline{
    \leavevmode
\epsffile{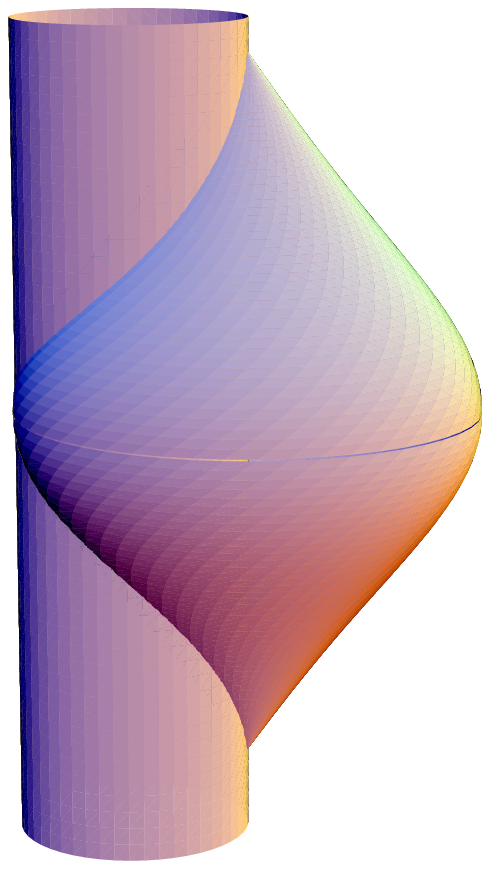}}
       \centerline{
       \protect{Geodesics of $SU(2)$} \label{fig5}}
    \end{minipage}
    \hspace{10mm}
    \begin{minipage}[b]{2.0in}
    \epsfxsize=0.5 in
    \centerline{
    \leavevmode
    \epsffile{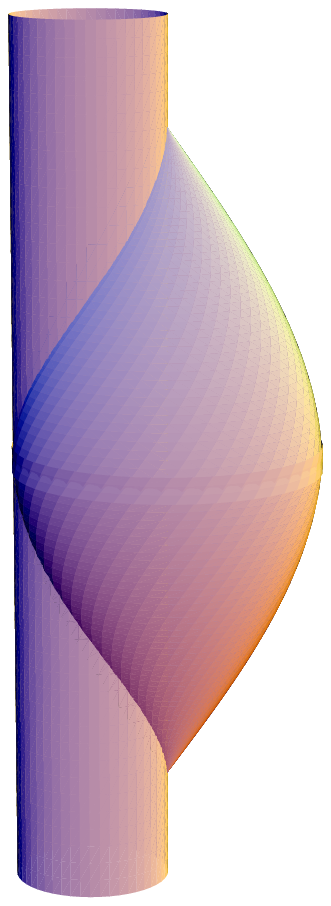}}
       \centerline{ \protect{Geodesics of $\s^3$}}
     \end{minipage}\label{fig4}
    \hspace{10mm}
\end{center}
\end{figure}

\item For $\ell \neq 0$  and $\ell^2  w^2 + 4 m (u^2 + v^2) < 0$,($m<0$), the parametric equations of the geodesics starting at the  origin of
$\widetilde{SL}(2,\R)$ in the case $w \neq 0$ are:
\[
\begin{cases}
x = \displaystyle\frac{2 \tanh(\frac{C t}{2})}{\sqrt{C^{2}+ \ell^2 w^2 \tanh^{2}(\frac{C t}{2})}} \left(u \cos T' - v \sin T'\right)\\
y = \displaystyle\frac{2 \tanh(\frac{C t}{2})}{\sqrt{C^{2}+ \ell^2 w^2 \tanh^{2}(\frac{C t}{2})}} \left(v \cos T' + u \sin T'\right)\\
z=w t-\displaystyle\frac{\ell^{2}w}{4m}t - \frac{\ell w}{2 m} T',\\
\end{cases}
\]
where $C=\sqrt{- \ell^2  w^2 - 4 m (u^2 + v^2) }$ and $T'
=\arctan\displaystyle\frac{\ell w\tanh(\frac{C t}{2})}{C} $

For $\ell \neq 0$  and $\ell^2  w^2 + 4 m (u^2 + v^2) = 0$,($m<0$), the parametric equations of the geodesics starting at the origin of
$\widetilde{SL}(2,\R)$ are, for $w \neq 0$,
\[
\begin{cases}
x = \displaystyle \frac{2 t}{\sqrt{ 4 + \ell^2 w^2 t^2}}\left( u \cos T - v \sin T\right) \\
y = \displaystyle \frac{2 t}{\sqrt{ 4 + \ell^2 w^2 t^2}}\left( v \cos T + u  \sin T\right) \\
z = w t - \displaystyle\frac{\ell^2 w t}{4 m} +
\displaystyle\frac{\ell T}{2m}
\end{cases}
\qquad T
=\arctan\displaystyle\frac{\ell w t}{2} .
\]
\item If $m=0$ and $\ell\neq 0$, the parametric equations of the geodesics arising from the
origin of the Heisenberg group $\h_3$  in the cases $w \neq 0$ and $w = 0$, are, respectively,
\[
\begin{cases}
x(t) = \displaystyle \frac{v}{\ell w}\cos(\ell w t) + \frac{u}{\ell w}\sin(\ell w t) - \frac{v}{\ell w} \\
y(t) = \displaystyle\frac{v}{\ell w}\sin(\ell w t) - \frac{u}{\ell w} \cos(\ell w t) + \frac{u}{\ell w} \\
z(t) = w t + \displaystyle\frac{u^2 + v^2}{ 2 w} t  - \frac{u^2 +
v^2}{ 2 w}\sin (\ell wt)
\end{cases}
\qquad
\begin{cases}
x = u t \\
y =v t \\
z = 0
\end{cases}
\]
\begin{figure}[h!]
\begin{center}
\begin{minipage}[b]{2 in}
    \epsfxsize=0.15 in
    \centerline{
    \leavevmode
    \epsffile{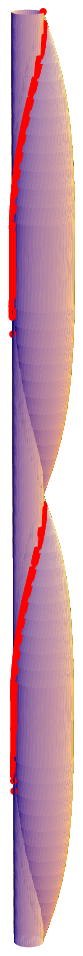}}
      \centerline{\protect{Geodesics of $\widetilde{SL}(2,\R)$\label{fig6}}}
\end{minipage}
\hspace{10mm}
\begin{minipage}[b]{2.0in}
    \epsfxsize=0.35 in
    \centerline{
    \leavevmode
    \epsffile{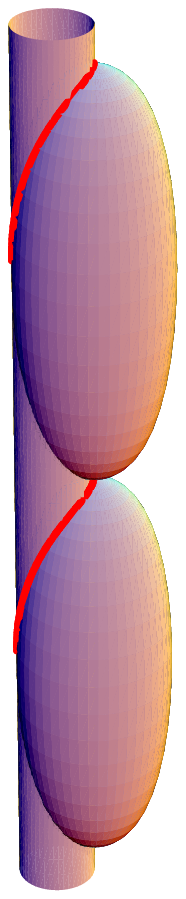}}
       \centerline{ \protect{Geodesics of $\h_3$ }\label{fig3}}
        \end{minipage}\label{fig4}
\end{center}
\end{figure}

    \item If  $m>0$, $\ell = 0$, we have the following cartesian equations of the geodesics starting at the origin of
    $\s^{2}\times\R$,
     in the cases $w \neq 0$ and  $w = 0$:
\[
\begin{cases}
v x - u y = 0 \\
x^2 + y^2 = \displaystyle \frac{1}{m}
\tan^2(\frac{\sqrt{m(u^{2}+v^{2})}}{w} z)
\end{cases}
\qquad
\begin{cases}
v x - u y = 0 \\
z = 0
\end{cases}.
\]

\item If  $m<0$, $\ell = 0$, we have the following cartesian equations of the geodesics starting at the origin of $\h^2 \times \R$,
      respectively
     in the case $w \neq 0$ and  $w = 0$:
\[
\begin{cases}
v x - u y = 0 \\
x^2 + y^2 = - \displaystyle \frac{1}{m} \tanh^2(\frac{\sqrt{-
m(u^{2}+v^{2})}}{w} z)
\end{cases}
\qquad
\begin{cases}
v x - u y = 0 \\
z = 0
\end{cases}
\]

\begin{figure}[h!]
\begin{center}
\begin{minipage}[b]{2.0in}
    \epsfxsize=0.65 in
    \centerline{
    \leavevmode
    \epsffile{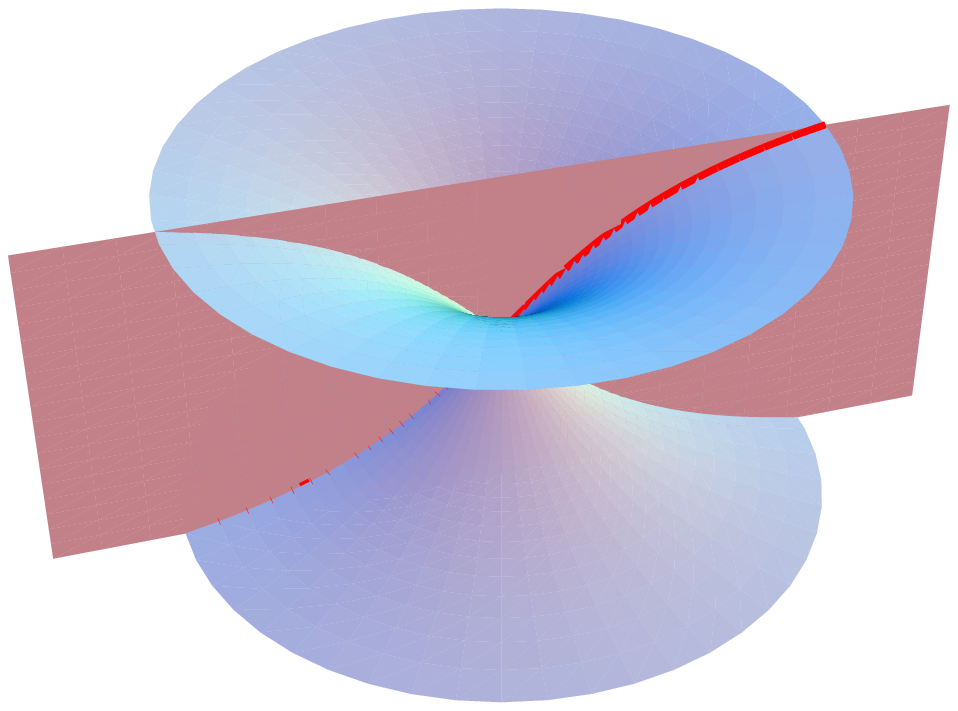}}
        \centerline{\protect{Geodesics of  $\s^{2}\times\R$}\label{fig1}}
        \end{minipage}
    \hspace{20mm}
    \begin{minipage}[b]{2 in}
    \epsfxsize=0.45 in
    \centerline{
    \leavevmode
    \epsffile{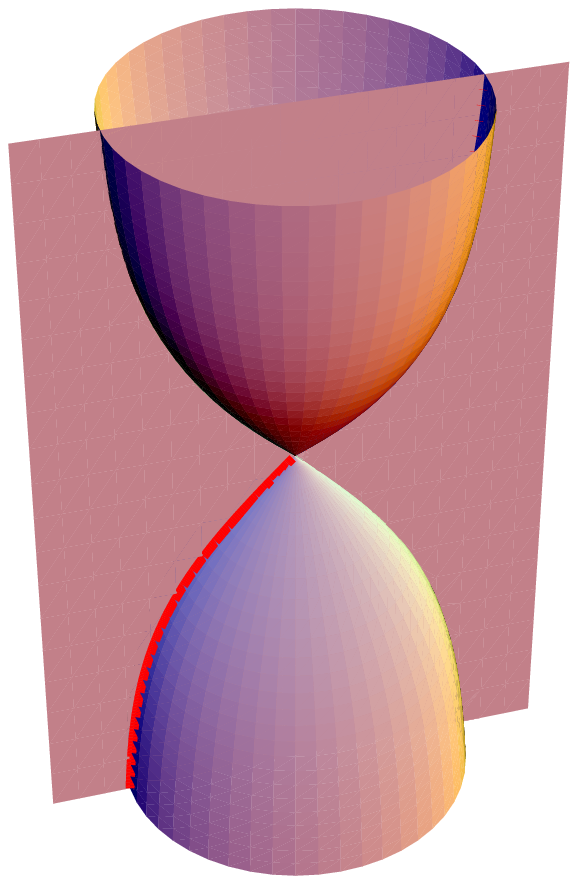}}
      \centerline{\protect{Geodesics of $\h^{2}\times\R$}\label{fig2}}

\end{minipage}
\end{center}
\end{figure}
\end{itemize}

\section{Totally geodesic submanifolds}
 By definition, a submanifold $V \subset M$   is \textbf{totally geodesic} if
  each of its geodesics is also a geodesic of $M$. We recall the
  following
\begin{trm}
A submanifold  $(V, g)$ of a Riemannian manifold $(M,
\overline{g})$ is totally geodesic if and only if its second
fundamental form is identically zero.
\end{trm}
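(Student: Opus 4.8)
The plan is to deduce everything from the Gauss decomposition of the ambient Levi-Civita connection. Write $\overline{\nabla}$ for the Levi-Civita connection of $(M,\overline{g})$ and $\nabla$ for that of $(V,g)$; for vector fields $X,Y$ tangent to $V$ one has
\[
\overline{\nabla}_X Y = \nabla_X Y + \mathrm{II}(X,Y),
\]
where $\nabla_X Y$ is the component tangent to $V$ and the second fundamental form $\mathrm{II}(X,Y)$ is the component normal to $V$. First I would recall the two standard facts about this decomposition: that $\mathrm{II}$ is tensorial (i.e. $C^\infty(V)$-bilinear in $X$ and $Y$) and symmetric, so that it defines at each point $p\in V$ a symmetric bilinear map $\mathrm{II}_p\colon T_pV\times T_pV\to (T_pV)^{\perp}$.

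For the implication ``$\mathrm{II}\equiv 0 \Rightarrow V$ totally geodesic'', let $\gamma$ be any geodesic of $(V,g)$, so that $\nabla_{\dot\gamma}\dot\gamma=0$. Applying the Gauss formula along $\gamma$ gives $\overline{\nabla}_{\dot\gamma}\dot\gamma = \nabla_{\dot\gamma}\dot\gamma + \mathrm{II}(\dot\gamma,\dot\gamma)=0$, so $\gamma$ is also a geodesic of $(M,\overline{g})$; hence every geodesic of $V$ is a geodesic of $M$, which is exactly the definition of $V$ being totally geodesic. Conversely, assume $V$ is totally geodesic, fix $p\in V$ and $v\in T_pV$, and let $\gamma$ be the geodesic of $(V,g)$ with $\gamma(0)=p$ and $\dot\gamma(0)=v$, which exists by the local existence theorem for the geodesic equation. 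By hypothesis $\gamma$ is also a geodesic of $M$, so $\overline{\nabla}_{\dot\gamma}\dot\gamma=0$ and $\nabla_{\dot\gamma}\dot\gamma=0$; the Gauss formula then forces $\mathrm{II}(\dot\gamma,\dot\gamma)\equiv 0$ along $\gamma$, and evaluating at $t=0$ gives $\mathrm{II}_p(v,v)=0$. Since $p$ and $v$ are arbitrary, $\mathrm{II}_p(v,v)=0$ for every $p$ and every $v\in T_pV$, and polarization,
\[
2\,\mathrm{II}(X,Y)=\mathrm{II}(X+Y,X+Y)-\mathrm{II}(X,X)-\mathrm{II}(Y,Y),
\]
valid because $\mathrm{II}$ is bilinear and symmetric, yields $\mathrm{II}\equiv 0$.

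There is essentially no serious obstacle here: the only ingredients that are not one-line manipulations are the Gauss decomposition itself (together with the tensoriality and symmetry of $\mathrm{II}$) and the local existence of geodesics with prescribed initial velocity, both of which are standard and may be quoted. The one point that deserves a word of care is the passage from $\mathrm{II}_p(v,v)=0$ for all $v$ to $\mathrm{II}\equiv 0$, which is precisely the polarization identity displayed above and relies on the symmetry of $\mathrm{II}$.
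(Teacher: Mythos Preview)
Your proof is correct and is in fact the standard argument via the Gauss formula and polarization. However, there is nothing to compare it against: the paper does not prove this theorem at all, it merely \emph{recalls} it as a known characterization (the statement is introduced with ``We recall the following'' and is followed immediately by the discussion of Hadamard's problem, with no proof given). So your write-up supplies a proof where the paper simply cites the result.
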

In \cite{[H]}, Hadamard took into consideration the problem of
determining, locally,  the  Riemannian manifolds  $(M,\overline{g})$,
of dimension $3$, endowed with a foliation $\frak F$ so that each
geodesic of $M$ tangent to a leaf at a point is completely
contained in the leaf. Then the leaves of the foliation $\frak F$ are
 totally geodesic submanifolds and that is the reason why such a
foliation is called  {\bf totally geodesic}.

Cesare Rimini in~\cite{[Rm]} studied the problem proposed by
Hadamard. He looked for the three-dimensional manifolds with a
complete group of isometries of dimension $4$ that admit totally
geodesic foliations. He gave some important properties of these
manifolds, found the conditions so that such manifolds admit
totally geodesic foliations and, finally, determined
 these foliations. \\
The results obtained by Rimini are resumed in the following theorems.
\begin{trm}\label{trm:3}{\em (Rimini)}
 A Riemannian manifold $(M,\overline{g})$
admits a foliation $\frak F$ of totally geodesic hypersurfaces of
equation $x^{n}=const.$ if and only if they are isometric and the
isometries are determined by their orthogonal trajectories.
\end{trm}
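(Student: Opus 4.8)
The plan is to straighten $\overline{g}$ in coordinates adapted to the orthogonal trajectories of $\frak F$, and then to read off the second fundamental form of a leaf as a single $t$‑derivative of its induced metric.

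First I would pass to adapted coordinates. Working locally, $\frak F$ is given by $x^n=\mathrm{const}$; write $L_c=\{x^n=c\}$ and fix $L_0$. The orthogonal distribution $D^\perp$ is a line field, transverse to every leaf (since $dx^n$ does not vanish on it), so its integral curves — the orthogonal trajectories of $\frak F$ — can be parametrized by $x^n$ itself. Transporting a fixed chart $u=(u^1,\dots,u^{n-1})$ from $L_0$ along these trajectories gives coordinates $(u^1,\dots,u^{n-1},t)$ with $t=x^n$, in which the leaves remain the slices $\{t=c\}$ (parametrized by $u$) and $\partial_t$, being tangent to the trajectories, is orthogonal to every leaf. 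Hence
\[
\overline{g}=\sum_{i,j=1}^{n-1} g_{ij}(u,t)\,du^i du^j+\lambda(u,t)^2\,dt^2,\qquad \lambda=|\partial_t|_{\overline{g}}\neq 0,
\]
the induced metric of $L_c$ is $\sum_{i,j} g_{ij}(u,c)\,du^i du^j$, and the map $\Phi_{c',c}\colon L_c\to L_{c'}$ obtained by sliding a point along its orthogonal trajectory is, in these coordinates, $(u,c)\mapsto(u,c')$. Thus $\Phi_{c',c}$ is an isometry if and only if $g_{ij}(u,c)=g_{ij}(u,c')$ for all $u$.

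Next I would compute the second fundamental form of $L_c$ with respect to the unit normal $\nu=\lambda^{-1}\partial_t$. From $\langle\nabla_{\partial_i}\partial_j,\partial_k\rangle=\tfrac12(\partial_i g_{jk}+\partial_j g_{ik}-\partial_k g_{ij})$ together with $g_{it}\equiv 0$ for $i\le n-1$, one obtains, for $i,j\le n-1$,
\[
\mathrm{II}_{ij}=\langle\nabla_{\partial_i}\partial_j,\nu\rangle=\frac{1}{\lambda}\langle\nabla_{\partial_i}\partial_j,\partial_t\rangle=-\frac{1}{2\lambda}\,\partial_t g_{ij}(u,t).
\]
Since $\lambda$ never vanishes, this shows that $L_c$ is totally geodesic for every $c$ if and only if $\partial_t g_{ij}\equiv 0$, i.e. the functions $g_{ij}$ do not depend on $t$; and by the first step this last condition is exactly the assertion that every $\Phi_{c',c}$ is an isometry, that is, the leaves are mutually isometric via the maps determined by their orthogonal trajectories. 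Reading this equivalence in both directions proves the theorem: a totally geodesic foliation forces $\partial_t g_{ij}\equiv0$, hence $\Phi_{c',c}$ is an isometry; conversely, if the leaves are isometric through the orthogonal‑trajectory maps then $g_{ij}$ is $t$‑independent, so $\mathrm{II}\equiv0$ on every leaf.

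The calculation is immediate once the coordinates are in place, so the only delicate point is the first step: checking that parametrizing the orthogonal trajectories by $x^n$ does furnish a genuine chart in which the leaves are still the level sets $\{x^n=c\}$ and $\partial_{x^n}$ is leaf‑orthogonal, which is what puts $\overline{g}$ in the block form above. (One should resist the temptation to reparametrize the orthogonal trajectories by arclength, since distinct trajectories generally take different "times" to run from $L_0$ to $L_c$, and then the leaves would fail to be coordinate slices.)
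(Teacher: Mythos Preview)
Your argument is correct and complete: the passage to block-diagonal coordinates via the orthogonal trajectories, the identity $\mathrm{II}_{ij}=-\tfrac{1}{2\lambda}\,\partial_t g_{ij}$, and the equivalence between $\partial_t g_{ij}\equiv 0$ and the orthogonal-trajectory maps $\Phi_{c',c}$ being isometries are all valid. Note, however, that the paper does not supply its own proof of this theorem; it is quoted as a classical result of Rimini (reference~\cite{[Rm]}, 1903) and stated without argument, so there is nothing in the text to compare your approach against. What you have written is the standard modern proof and is in all likelihood the argument Rimini gave.
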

\begin{trm}\label{trm:4}{\em (Ricci - Rimini)}
If a space $(M,g)$, $dim(M)=3$, contains a family of totally
geodesic surfaces, then this consists on surfaces orthogonal to a
principal Ricci direction $\xi$.\\
The principal Ricci curvature of the space, relative to $\xi$,
calculated at $p$, is equal to the Gauss curvature of the surface
of the family mentioned above passing through $p$, and thus it is
constant along every principal curve.
\end{trm}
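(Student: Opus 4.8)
The plan is to argue at a fixed point $p$, using only that the surface $V$ of the family through $p$ is totally geodesic; the whole family is needed only for the last assertion. Pick an orthonormal basis $e_1,e_2,\xi$ of $T_pM$ with $e_1,e_2\in T_pV$ and $\xi$ the unit normal. Two remarks drive everything. First, since $V$ is totally geodesic its shape operator with respect to $\xi$ vanishes on $V$, and because $\langle\nabla_X\xi,\xi\rangle=\tfrac12 X\langle\xi,\xi\rangle=0$ this gives $\nabla_X\xi=0$ for every $X$ tangent to $V$. Second, the second fundamental form of $V$ being identically zero, so is its covariant derivative, and the Codazzi equation of $V\subset M$ collapses to $\bigl(R(X,Y)\xi\bigr)^{\top}=0$ for $X,Y$ tangent to $V$; equivalently, $R(X,Y)\xi$ is proportional to $\xi$ whenever $X,Y\perp\xi$.

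First I would show that $\xi$ is a principal direction of the Ricci operator at $p$. For $\{j,k\}=\{1,2\}$ the trace defining $\mathrm{Ric}(\xi,e_j)$ reduces, by the antisymmetries of $R$, to the single term $\langle R(e_k,\xi)e_j,e_k\rangle$; the pair symmetry of $R$ followed by its antisymmetry in the last two arguments turns this into $-\langle R(e_j,e_k)\xi,e_k\rangle$, which vanishes by the Codazzi remark $R(e_j,e_k)\xi\parallel\xi$. Hence
\[
\mathrm{Ric}(\xi,e_1)=\mathrm{Ric}(\xi,e_2)=0 ,
\]
so $\mathrm{Ric}(\xi,\cdot)$ kills $\xi^{\perp}$ and $\xi$ is a Ricci eigenvector; since this holds at every point of $V$, each surface of the family is orthogonal to a principal Ricci direction.

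For the numerical relation I would invoke the Gauss equation of $V\subset M$: its second fundamental form being zero, the intrinsic curvature of $V$ coincides with the ambient curvature restricted to $TV$, so the Gauss curvature of $V$ at $p$ equals the ambient sectional curvature $K_M(e_1\wedge e_2)$ of the plane $\xi_p^{\perp}$. In dimension three this is exactly the curvature invariant attached to the direction $\xi$ --- it can be written $\tfrac12\,\mathrm{scal}-\mathrm{Ric}(\xi,\xi)$, and $\xi$ is by the previous step an eigendirection of $\mathrm{Ric}$ --- i.e.\ what the statement calls the principal Ricci curvature relative to $\xi$. Finally, the constancy of this invariant along the principal curves I would deduce from the structure of totally geodesic foliations described by Rimini (cf.\ Theorem \ref{trm:3}): the leaves of the family are mutually isometric, the isometry identifying two of them being the one carried along the orthogonal trajectories of the foliation; since isometries preserve Gauss curvature, the value $K_V(p)$ is constant along each such principal curve.

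The routine parts are the curvature-symmetry bookkeeping in the Ricci computation and the identification of the leaf's Gauss curvature with $K_M(\xi_p^{\perp})$ via the Gauss equation. The points that demand care are: justifying that the second fundamental form of $V$, hence its covariant derivative, vanishes identically along $V$, so that Codazzi can be applied pointwise; and the final assertion, where one must make precise what ``principal Ricci curvature relative to $\xi$'' and ``principal curve'' mean and then use Theorem \ref{trm:3} in the form producing the isometries along the orthogonal trajectories. That last point --- not any of the elementary curvature identities --- is where Rimini's structural analysis really does the work.
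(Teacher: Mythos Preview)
The paper does not prove Theorem~\ref{trm:4}: it is quoted as a classical result of Ricci and Rimini and merely cited from~\cite{[Rm]}, so there is no ``paper's own proof'' to compare against. Judged on its own, your argument is sound. The Codazzi step $(R(e_1,e_2)\xi)^{\top}=0$ for a totally geodesic leaf is correct, and your trace computation showing $\mathrm{Ric}(\xi,e_j)=0$ for $j=1,2$ is the right way to see that $\xi$ is a Ricci eigendirection; the Gauss equation then identifies the intrinsic curvature of the leaf with $K_M(\xi^{\perp})=\tfrac12\,\mathrm{scal}-\mathrm{Ric}(\xi,\xi)$, which is indeed what the classical Italian school (Ricci, Bianchi, Rimini) called the ``Riemannian principal curvature'' attached to the direction $\xi$ in dimension~$3$. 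Your final appeal to Theorem~\ref{trm:3} for constancy along the orthogonal trajectories is legitimate, though note it requires the family to be a genuine foliation, which is implicit in the statement. The one place to tighten the write-up is to make explicit that ``principal Ricci curvature relative to $\xi$'' is being read as the sectional curvature of $\xi^{\perp}$ (not the eigenvalue $\mathrm{Ric}(\xi,\xi)$), since the theorem is false under the latter reading.
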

\begin{trm}\label{trm:5}{\em (Rimini)}
In a Riemannian manifold of dimension $3$, with isometry group  of
dimension $4$, there are not totally geodesic surfaces, except for
the product spaces. In these spaces there are two types of totally
geodesic surfaces:
\begin{itemize}
\item[-] the surfaces orthogonal to the systatic geodesics, that form a family of totally geodesic surfaces $\frak{F}$
         with non zero constant gaussian curvature;
\item[-] a {\it cylinder} having as generating lines the
systatic lines and as gene \-rating curve a geodesic of one of the
totally geodesic surfaces of the foliation $\frak F$. Each of these
totally geodesic surfaces has the Gaussian curvature equal to zero.
\end{itemize}
\end{trm}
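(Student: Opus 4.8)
The plan is to make the problem local at the origin, exploit the $SO(2)$-isotropy to normalise the tangent plane, and then settle each remaining case with the explicit prime integrals of Section~2 together with Rimini's structural theorems. Since a totally geodesic surface is, near any of its points, the $\exp$-image of its tangent plane there, and since the C-V spaces are homogeneous, it suffices to decide, for each $2$-plane $\Pi\subset T_{0}M$, whether $\exp_{0}(\Pi)$ is totally geodesic; every totally geodesic surface of $M$ is then an isometric copy of one of these. The isotropy at $0$ (rotations about the $z$-axis) fixes $E_{3}(0)$ and acts transitively on the directions of $\langle E_{1},E_{2}\rangle$, so, up to an isometry fixing $0$, $\Pi$ is one of: the \emph{horizontal} plane $\langle E_{1},E_{2}\rangle$; the \emph{vertical} plane $\langle E_{2},E_{3}\rangle$; or a \emph{tilted} plane $\langle E_{2},\ \cos\alpha\,E_{1}+\sin\alpha\,E_{3}\rangle$ with $\alpha\in(0,\pi/2)$.

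I would first eliminate the tilted planes. Computing the Ricci tensor of \eqref{eq:Cartan-Vranceanu} in the frame \eqref{baseorto} one finds it diagonal with $\mathrm{Ric}(E_{1})=\mathrm{Ric}(E_{2})$, and this common value differs from $\mathrm{Ric}(E_{3})$ exactly off the constant-curvature locus $4m=\ell^{2}$; hence for every C-V space with a four-dimensional isometry group the principal Ricci directions are precisely $E_{3}$ and the horizontal ones. The pointwise content of Theorem~\ref{trm:4} — the unit normal of a totally geodesic surface is, at each of its points, a principal Ricci direction — then rules out the tilted planes, since $\cos\alpha\,E_{1}+\sin\alpha\,E_{3}$ is principal only for $\alpha=0$ or $\alpha=\pi/2$. (If one prefers not to pass through the ``family'' formulation of Theorem~\ref{trm:4}, the same conclusion comes from writing the candidate surface in Fermi coordinates adapted to a geodesic of the base and exhibiting a non-vanishing component of its second fundamental form, proportional to $\sin\alpha\,C C'$ with $C$ the relevant warping function.) Thus a totally geodesic surface must be orthogonal either to the systatic direction $E_{3}$ or to a horizontal direction.

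Next, the vertical plane $\langle E_{2},E_{3}\rangle$. Since $E_{3}=\partial/\partial z$ and the coefficients of $E_{2}$ are $z$-independent, $[E_{2},E_{3}]=0$, so the distribution is integrable with integral surface the plane $\{x=0\}$. A geodesic issuing from $0$ tangent to it has $\dot\gamma(0)=(0,v,w)$, i.e.\ $u=0$ in \eqref{integrali-primi}; then the prime integral \eqref{cilindro} reduces to $\ell(x^{2}+y^{2})w-2xv=0$ along $\gamma$. Imposing $x\equiv 0$ forces $\ell\,y(t)^{2}w\equiv 0$, which for $\ell\neq 0$, $w\neq 0$, $v\neq 0$ is impossible: so $\{x=0\}$ is \emph{not} totally geodesic when $\ell\neq 0$. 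When $\ell=0$ the same equation forces $x\equiv 0$, the geodesic stays in $\{x=0\}$, and a short check of the second fundamental form confirms that $\{x=0\}$ is totally geodesic — in the product model it is $\gamma\times\R$ with $\gamma$ a geodesic of the surface factor, a flat cylinder over a systatic line. Acting by $SO(2)$ and by translations produces all such cylinders, the surfaces of the second type, each of Gaussian curvature zero.

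Finally, the horizontal plane $\langle E_{1},E_{2}\rangle$. A direct bracket computation gives $[E_{1},E_{2}]=-2my\,E_{1}+2mx\,E_{2}+\ell\,E_{3}$, so the horizontal distribution is integrable precisely when $\ell=0$. For $\ell\neq 0$ it is a contact distribution; by the pointwise form of Theorem~\ref{trm:4} a totally geodesic surface tangent to it at $0$ would keep its normal in the isometry-invariant $E_{3}$-direction along the whole surface, hence be everywhere tangent to the horizontal distribution, contradicting non-integrability. Combined with the previous paragraph, this shows that the spaces with $\ell\neq 0$ — $SU(2)$, $\widetilde{SL}(2,\R)$ and $\h_{3}$ — carry no totally geodesic surface. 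For $\ell=0$ the horizontal distribution integrates to the slices $\{z=\mathrm{const}\}$, which are factors of the Riemannian product $S\times\R$ and hence totally geodesic with constant Gaussian curvature $4m\neq 0$; these are orthogonal to the systatic geodesics and, by Theorem~\ref{trm:3}, are mutually isometric and form the foliation $\frak F$ — the surfaces of the first type. This exhausts all possibilities. The step I expect to be the main obstacle is the tilted case: one must either upgrade Theorem~\ref{trm:4} from its ``family'' version to the pointwise statement used above, or replace that argument by the explicit second fundamental form computation in adapted coordinates.
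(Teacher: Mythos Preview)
The paper does not prove this theorem: it is stated as a classical result of Rimini and cited from \cite{[Rm]}, alongside Theorems~\ref{trm:3} and \ref{trm:4}, as background. What the paper \emph{does} prove, in the paragraph that follows, is the C--V specialisation, and its argument differs from yours: rather than a case split on tangent planes, the authors note that among the principal Ricci directions $E_1,E_2,E_3$ only $E_3$ generates an isometry, so by Theorem~\ref{trm:4} any totally geodesic foliation is $\mathrm{Ker}(\omega^3)$; the single computation $\omega^3\wedge d\omega^3=\ell\,dx\wedge dy\wedge dz$ then gives integrability iff $\ell=0$. This is shorter, but it too passes silently from ``one totally geodesic surface'' to ``a foliation'', and it treats only the horizontal family explicitly --- the flat cylinders of the second type are asserted, not derived.

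Your approach --- reducing by homogeneity and $SO(2)$-isotropy to three model $2$-planes at the origin and testing each --- is more self-contained and covers both types. The vertical-plane step via \eqref{cilindro} is clean; the horizontal step via $[E_1,E_2]$ is essentially the same contact obstruction the paper uses. Note, though, that you are really proving the C--V version, not Rimini's general statement: all your computations live in the model \eqref{eq:Cartan-Vranceanu}, so you should either say this explicitly or add the remark that the C--V family (together with the hyperbolic space, which has $6$-dimensional isometry group and is excluded by hypothesis) exhausts the relevant local models.

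The genuine gap is the one you flag yourself. Theorem~\ref{trm:4} as stated concerns a \emph{family} of totally geodesic surfaces, and you need the pointwise assertion ``the unit normal of a totally geodesic surface is a Ricci eigendirection at every point'' for both the tilted and horizontal cases. This follows directly from Codazzi: with $B\equiv 0$ one gets $\overline R(X,Y,\xi,Z)=0$ for all tangent $X,Y,Z$, and in dimension~$3$ this forces $\mathrm{Ric}(\xi,\cdot)$ to vanish on the tangent space, i.e.\ $\xi$ is a Ricci eigenvector. Inserting that one line closes your argument.
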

We want to determine the totally geodesic surfaces
isometrically immersed in the Cartan-Vranceanu manifolds.\\
According to Theorem \ref{trm:4} we have that a foliation
$\mathfrak F$ is  totally geodesic if at each point the orthogonal
trajectory to $\frak F$ is an isometry generated by a principal
Ricci direction. The principal Ricci directions of $(M,ds_{\ell
m}^{2})$ are determined by the vector fields $E_1, E_2, E_3$ in
\eqref{baseorto} and the only isometry  generated by a principal
Ricci direction  is that generated by $E_3$.  Hence, supposing
that the foliation $\quad \mathfrak F \quad$ is totally geodesic,
the leaves must be orthogonal to the principal Ricci direction
 $E_3 = Z =\frac{\partial}{\partial z}$ and we have that
\[
\mathfrak F  = \mbox{Ker}(\omega),
\]
where
\[
 \omega = dz  + \frac{\ell}{2} \frac{y dx - x  dy }{[1 + m(x^{2}
+ y^{2}]}.
 \]
But
\[
\omega \wedge d\omega = \ell dx\wedge dy \wedge dz,
\]
and therefore the form $\omega$ will be integrable if and only if $\ell = 0$. So
we have proved the following
\begin{trm} In a
Cartan-Vranceanu space $(M,ds_{\ell m}^{2})$ there are not totally
geodesic surfaces, with the exception of  the product spaces
$$\s^2(c) \times \R \qquad \h^2(-c) \times \R.$$
In such spaces there are two types of  totally geodesic surfaces:
\begin{itemize}
\item  the surfaces $\s^2(c) \times \{a\}$ and $\h^2(-c) \times \{a\}$;
\item a {\it cylinder} having as generating lines the curves tangent to
$E_3$ and as the generating curve a geodesic of $\s^2(c) \times
\{a\}$ or $\h^2(-c) \times \{a\}$. Each of these totally geodesic
surfaces has the Gaussian curvature zero.
\end{itemize}
\end{trm}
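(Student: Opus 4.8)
The plan is to recast the problem, via Rimini's Theorems~\ref{trm:3} and~\ref{trm:4}, as a Frobenius integrability question for one $1$-form, and then to settle the product case that emerges by direct inspection. First I would compute the Ricci operator of $ds^{2}_{\ell,m}$ in the orthonormal frame~\eqref{baseorto}: a routine computation shows it is diagonal, with the double eigenvalue $4m-\tfrac{\ell^{2}}{2}$ on the plane field $\langle E_{1},E_{2}\rangle$ and the simple eigenvalue $\tfrac{\ell^{2}}{2}$ on the line field $\langle E_{3}\rangle$; these are distinct exactly when $4m\neq\ell^{2}$, that is, precisely for the spaces of four-dimensional isometry group under consideration (when $4m=\ell^{2}$ the metric has constant sectional curvature). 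Hence the principal Ricci directions of $(M,ds^{2}_{\ell,m})$ are the lines contained in $\langle E_{1},E_{2}\rangle$ together with the line $\langle E_{3}\rangle$.

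By Theorem~\ref{trm:4} a family of totally geodesic surfaces is orthogonal to a principal Ricci direction, and by Theorem~\ref{trm:3} its orthogonal trajectories are orbits of a local one-parameter group of isometries, so some nonzero Killing field is everywhere proportional to the unit normal of the family, and that Killing field is therefore everywhere a principal Ricci direction. Writing a general Killing field as $V=aX+bY+cZ+dR$ with the basis recalled above, its $E_{3}$-component is $c-\tfrac{\ell}{D}\bigl(ax-by+\tfrac{d}{2}(x^{2}+y^{2})\bigr)$ and its $E_{1}$-component is $a\tfrac{2mxy}{D}+b\bigl(1-\tfrac{2my^{2}}{D}\bigr)-d\tfrac{y}{D}$. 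When $\ell\neq0$, requiring $V\in\langle E_{1},E_{2}\rangle$ identically forces $a=b=c=d=0$, while requiring $V$ to be proportional to $E_{3}$ identically forces $a=b=d=0$, i.e.\ $V=cZ$. So for $\ell\neq0$ the only conceivable totally geodesic foliation is $\mathfrak F=\ker\omega$, where $\omega=\omega^{3}=dz+\tfrac{\ell}{2}\tfrac{y\,dx-x\,dy}{D}$ is the $1$-form dual to $E_{3}$; by the Frobenius theorem $\mathfrak F$ is a foliation iff $\omega\wedge d\omega=0$, and a short computation gives $d\omega=-\tfrac{\ell}{D^{2}}\,dx\wedge dy$, hence $\omega\wedge d\omega=-\tfrac{\ell}{D^{2}}\,dx\wedge dy\wedge dz$, which vanishes identically iff $\ell=0$. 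Thus $\h_{3}$, $SU(2)$ and $\widetilde{SL}(2,\R)$ carry no totally geodesic foliation.

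It remains to treat $\ell=0$, where, as recalled in the Introduction, $ds^{2}_{0,m}$ is the Riemannian product of the surface $S$ of constant Gaussian curvature $4m$ with $\R$: this is $\s^{2}(c)\times\R$ for $m>0$ and $\h^{2}(-c)\times\R$ for $m<0$, the case $m=0$ giving flat $\R^{3}$, whose isometry group is six-dimensional and hence outside the present family. Since $\dim\mathrm{Isom}(\s^{2}\times\R)=\dim\mathrm{Isom}(\h^{2}\times\R)=4$, Rimini's Theorem~\ref{trm:5} applies and produces exactly the two announced families; one then only has to verify directly that they are totally geodesic. The slices $S\times\{a\}$ are the fibres of the projection onto $\R$, so their second fundamental form vanishes because $E_{3}=\partial/\partial z$ is parallel, and they carry the metric of $S$, hence Gaussian curvature $4m\neq0$, consistently with the last assertion of Theorem~\ref{trm:4} for $\xi=E_{3}$. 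The cylinders $\gamma\times\R$ over a geodesic $\gamma$ of $S$ are orthogonal metric products of the totally geodesic submanifolds $\gamma$ and $\R$, so their second fundamental form also vanishes, and being a metric product of two geodesics each of them is intrinsically flat.

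The step I expect to be the main obstacle is upgrading ``no totally geodesic foliation'' to ``no totally geodesic surface'' when $\ell\neq0$, since an isolated totally geodesic surface need not belong to a foliation. I would close this gap by homogeneity: if $V$ is totally geodesic through $p$, then transitivity of the isometry group gives a Killing field $W$ with $W(p)\notin T_{p}V$, and dragging $V$ by the flow of $W$ yields, near $p$, a totally geodesic foliation; by Theorems~\ref{trm:3} and~\ref{trm:4} this foliation must be $\ker\omega^{3}$, contradicting the Frobenius computation above. (Alternatively one may simply invoke Theorem~\ref{trm:5}, which already denies totally geodesic surfaces to every non-product three-space with four-dimensional isometry group, once one notes that $\h_{3}$, $SU(2)$ and $\widetilde{SL}(2,\R)$ are not Riemannian products.) The rest of the argument is the Ricci and Frobenius computations together with the cited structure theorems.
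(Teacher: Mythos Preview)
Your proposal is correct and follows essentially the same route as the paper: reduce via Rimini's theorems to the question of whether $\ker\omega^{3}$ is integrable, and decide this by the Frobenius computation $\omega^{3}\wedge d\omega^{3}=-\tfrac{\ell}{D^{2}}\,dx\wedge dy\wedge dz$ (the paper writes simply $\ell\,dx\wedge dy\wedge dz$, dropping the harmless factor $1/D^{2}$). You are in fact more careful than the paper in two respects---you compute the Ricci eigenstructure and the Killing-field analysis explicitly, and you address the ``single surface versus foliation'' gap via homogeneity---but the architecture of the argument is the same.
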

\section{Totally umbilical surfaces}
A {\bf principal curve} on a surface is a curve whose tangent
vectors are all contained in a principal Ricci direction and an {\bf umbilical point} on a
surface is a point  where the
principal Ricci curvatures are equal.\\
The hypersurfaces whose first and second fundamental form differ
by a constant factor are called {\bf totally
umbilical}. \\
Let $\mathcal F$ be a foliation of codimension $1$ defined on a
Riemannian manifold of dimension $3$. Let $\xi$ be a vector field
normal to the leaves of $\mathcal F$. Then the Codazzi equation
is
\begin{align}
X\langle B(Y,Z),\xi \rangle &- Y \langle B(X,Z),\xi \rangle - \langle B([X,Y],Z),\xi \rangle  \nonumber \\
&-\langle B(Y,\overline{\nabla}_X Z),\xi \rangle + \langle
B(X,\overline{\nabla}_Y Z),\xi \rangle  = \overline{R}(X,Y,\xi,Z).
\nonumber
\end{align}
If $\mathcal F$ is  totally umbilical ($B = \lambda g$) then the
first member of the equation is zero and thus
 \[
\overline{R}(X,Y,\xi,Z) = 0.
\]

This relation implies that the integral curves of $\xi$ form one of
the principal congruences of the considered space. So we have
\begin{trm}\label{tr.omb1}
If in a Riemannian manifold of dimension $3$ there is a totally
umbilical foliation $\mathcal F$ of  codimension $1$, then this is
orthogonal to a  principal congruence.
\end{trm}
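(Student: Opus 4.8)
The plan is to read the Codazzi equation recorded just before the statement in reverse: feed in the umbilicity hypothesis, extract a vanishing block of curvature components, and then trace to obtain a Ricci eigenvalue relation for $\xi$. Work locally: fix a point of $M$ and a local orthonormal frame $\{e_1,e_2,\xi\}$ adapted to $\mathcal F$, with $e_1,e_2$ tangent to the leaves and $\xi$ the unit normal. Since everything in sight is tensorial, it suffices to run the argument with $X,Y,Z$ drawn from $\{e_1,e_2\}$.

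First I would substitute $\langle B(Y,Z),\xi\rangle=\lambda\,g(Y,Z)$ (and likewise for the remaining arguments of $B$) into the Codazzi equation and sort the terms according to whether $\lambda$ gets differentiated. The terms in which $\lambda$ is \emph{not} differentiated assemble into
\[
\lambda\bigl[\,X(g(Y,Z))-Y(g(X,Z))-g([X,Y],Z)-g(Y,\overline{\nabla}_XZ)+g(X,\overline{\nabla}_YZ)\,\bigr],
\]
and this bracket vanishes identically: expanding $X(g(Y,Z))=g(\overline{\nabla}_XY,Z)+g(Y,\overline{\nabla}_XZ)$, etc., and using $[X,Y]=\overline{\nabla}_XY-\overline{\nabla}_YX$, all terms cancel in pairs (this is only the metric and torsion-free properties of $\overline{\nabla}$). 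What remains on the left is $(X\lambda)\,g(Y,Z)-(Y\lambda)\,g(X,Z)$, and this too is zero, because by the definition of a totally umbilical hypersurface adopted here the proportionality factor $\lambda$ is a constant. Hence
\[
\overline{R}(X,Y,\xi,Z)=0\qquad\text{for all }X,Y,Z\text{ tangent to }\mathcal F .
\]

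To conclude I would contract this relation. With the adapted frame, $\mathrm{Ric}(\xi,e_j)=\sum_{k}\overline{R}(e_k,\xi,e_j,e_k)$; the term $k=\xi$ is killed by the antisymmetry of $\overline{R}$ in its first two slots, and for $k=e_i$ the pair symmetry together with the two antisymmetries give $\overline{R}(e_i,\xi,e_j,e_i)=\overline{R}(e_i,e_j,\xi,e_i)$, which is one of the components just shown to vanish. Therefore $\mathrm{Ric}(\xi,e_1)=\mathrm{Ric}(\xi,e_2)=0$, i.e. $\mathrm{Ric}(\xi,\cdot)$ is proportional to $g(\xi,\cdot)$, so $\xi$ is an eigenvector of the Ricci operator. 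Its integral curves then form one of the three mutually orthogonal principal Ricci congruences of $M$, and $\mathcal F$, being everywhere orthogonal to $\xi$, is orthogonal to that congruence, which is the assertion.

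The bookkeeping steps — the cancellation of the undifferentiated $\lambda$-terms and the sign chase through the symmetries of $\overline{R}$ — are routine but must be carried out carefully. The point I expect to be the real crux is where the constancy of $\lambda$ enters: it is precisely what forces the surviving left-hand side $(X\lambda)g(Y,Z)-(Y\lambda)g(X,Z)$ to vanish, and hence what yields $\mathrm{Ric}(\xi,\cdot)=0$ on leaf directions rather than merely $\mathrm{Ric}(\xi,e_j)=-e_j\lambda$. For $2$-dimensional leaves this is a genuine hypothesis, built into the notion of ``totally umbilical'' used above, whereas for leaves of dimension $\geq 3$ it would already follow from the Codazzi equation itself.
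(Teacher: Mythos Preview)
Your argument is correct and follows the same route as the paper: plug $B=\lambda g$ into the Codazzi identity to obtain $\overline{R}(X,Y,\xi,Z)=0$ for leaf-tangent $X,Y,Z$, and then infer that $\xi$ lies along a principal (Ricci) direction. The paper's proof is just the short paragraph preceding the statement and asserts both steps without computation; you have simply supplied the missing bookkeeping --- the cancellation of the $\lambda$-terms via the metric and torsion-free identities, the use of the constancy of $\lambda$ (which is exactly the paper's definition of ``totally umbilical''), and the contraction showing $\mathrm{Ric}(\xi,e_j)=0$ --- so there is no substantive difference in approach.
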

 Let $(M,\overline{g})$ be a $3$-dimensional Riemannian manifold
 with isometry group of  dimension $4$. Then we have:
\begin{trm}\label{omb2}
If $(N,g) \subset (M,\overline{g})$ is a totally umbilical surface
isometrically immersed in $M$, then $N$ is totally geodesic and
$M$ is a product manifold.
\end{trm}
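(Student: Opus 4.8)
The plan is to combine the previous theorems about foliations with the rigidity of the Cartan–Vranceanu family established in Section 3. First I would set up the surface $N$ locally as a leaf of a codimension-$1$ foliation $\mathcal F$ of a neighbourhood in $M$: since $N$ is a hypersurface, near any point $p\in N$ it extends to a local foliation whose leaves are the level sets of a submersion. Because $N$ is totally umbilical, Theorem \ref{tr.omb1} applies and tells us that the integral curves of the unit normal $\xi$ to the leaves form a principal congruence of $M$, i.e. $\xi$ is (up to sign) one of the principal Ricci directions. As computed in Section 3, the principal Ricci directions of $(M,ds^2_{\ell,m})$ are exactly $E_1,E_2,E_3$ from \eqref{baseorto}, and the only one of these whose integral curves generate an isometry (equivalently, the only one tangent to a Killing field) is $E_3=Z=\partial/\partial z$. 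Hence $\xi$ must be parallel to $E_3$, so $\mathcal F = \mathrm{Ker}(\omega)$ with $\omega$ the contact form in \eqref{forme}.

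Next I would invoke the integrability computation already carried out: $\omega\wedge d\omega = \ell\, dx\wedge dy\wedge dz$, so the distribution $\mathrm{Ker}(\omega)$ is integrable if and only if $\ell = 0$. Since $N$ is an honest surface in $M$, its tangent distribution is integrable, forcing $\ell = 0$. By the list of spaces in the Introduction, $\ell = 0$ means precisely that $M$ is a product $S\times\R$ with $S$ a surface of constant Gaussian curvature $4m$; that is, $M$ is one of the product manifolds $\s^2(c)\times\R$ or $\h^2(-c)\times\R$ (or the flat $\R^2\times\R$ when $m=0$), which gives the second conclusion of the theorem.

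It remains to show that in this product setting a totally umbilical surface orthogonal to $E_3$ is actually totally geodesic. With $\ell = 0$ the leaves of $\mathcal F$ are the slices $S\times\{a\}$, and I would compute the second fundamental form of such a slice directly from the Levi–Civita connection of the product metric: since $E_3 = \partial/\partial z$ is a parallel unit vector field in a Riemannian product, $\overline\nabla_X E_3 = 0$ for every $X$ tangent to a slice, hence the shape operator vanishes and each leaf is totally geodesic. Then the umbilicity factor $\lambda$ in $B = \lambda g$ must be zero, so $N$ itself (being such a leaf, or a surface with the same tangent distribution) is totally geodesic. The main obstacle I anticipate is the step linking "principal congruence" to "generates an isometry": one must be careful that Theorem \ref{tr.omb1} only delivers that $\xi$ is a principal Ricci direction, and ruling out $E_1$ and $E_2$ requires the additional input (from the Rimini-type Theorem \ref{trm:3}, or a direct check) that a totally umbilical foliation's normal trajectories must be isometries, which here singles out $E_3$. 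Once that reduction is in place, the rest is the short integrability argument plus the product computation.
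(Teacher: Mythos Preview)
Your reduction to $\xi \parallel E_3$ is where the argument breaks. Theorem~\ref{tr.omb1} only tells you that the unit normal $\xi$ lies along a principal Ricci direction; in the C--V spaces the Ricci tensor has a repeated eigenvalue on $\mathrm{span}(E_1,E_2)$ and a distinct one on $E_3$, so \emph{any} unit vector in the $E_1$--$E_2$ plane is equally a principal direction. There is no reason, from Theorem~\ref{tr.omb1} alone, to exclude the case $\xi\in\mathrm{span}(E_1,E_2)$, i.e.\ the case where $N$ is tangent to $E_3$ and hence contains the systatic geodesics. Your proposed fix via Theorem~\ref{trm:3} does not work: that theorem concerns totally \emph{geodesic} foliations, and the implication ``normal trajectories are isometries'' is precisely what fails for a merely umbilical foliation. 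So the integrability argument $\omega\wedge d\omega=\ell\,dx\wedge dy\wedge dz$ never gets off the ground, because you have not shown $N$ is orthogonal to $E_3$.

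The paper does not try to rule this case out a priori; it treats it directly. If $\xi\perp E_3$ then $N$ contains the systatic geodesics, and a result of Rimini (cited in the proof) says any such surface has intrinsic Gaussian curvature $G=0$. The Gauss equation for an umbilical hypersurface with $B=\lambda g$ reads $G=\overline K+\lambda^2$, where $\overline K=\overline R(X,Y,X,Y)$ is the ambient sectional curvature of $T_pN$; since $T_pN$ contains $E_3$ one has $\overline K=\ell^2/4\ge 0$, and combining $0=G=\overline K+\lambda^2$ forces $\overline K=0$ and $\lambda=0$. Thus $N$ is totally geodesic after all, and Theorem~\ref{trm:5} then pins $M$ down as a product. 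Your argument for the case $\xi\parallel E_3$ (integrability $\Rightarrow$ $\ell=0$ $\Rightarrow$ slices are totally geodesic) is fine and matches the paper; what is missing is this Gauss-equation treatment of the complementary case.
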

\begin{proof}
Taking into consideration Theorem \ref{tr.omb1}, the totally umbilical
surfaces $(N,g) \subset (M,\overline{g})$, if there are any, must
be orthogonal to one of the
 principal congruences. From Theorem \ref{trm:5} we have that the
 congruence of the systatic geodesics of $M$ admits orthogonal
 surfaces only if $M$ is $\s^2(c) \times \R$ or  $\h^2(-c) \times \R$
 and we have seen that these surfaces are totally geodesic.\\
 Hence each totally umbilical surface
 $(N,g)
\subset (M,\overline{g})$, must contain the
 systatic geodesics passing through its points. If $B = \lambda g$, with respect to  a basis
$\{X,Y\}$ of orthonormal vector fields on $N$, the Gauss equation becomes

\[
R(X,Y,X,Y) -  \overline{R}(X,Y,X,Y) =  \lambda^2.
                              \]
It follows that the wanted surface must satisfy
\[
R(X,Y,X,Y) -  \overline{R}(X,Y,X,Y)  \geq 0.
\]
We have seen (\cite{[Rm]}, \cite{[P]}) that any surface that
contains the
 systatic geodesics has the
Gaussian curvature $G = R(X,Y,X,Y)$ equal to zero. Therefore the sectional
curvature $\overline{R}(X,Y,X,Y)$ must also be zero and thus
\[
\lambda = 0.
\]
Hence we have that the second fundamental form is identically zero
and thus  $N$  is totally geodesic. We find then that $M$ is a
product space $\s^2(c) \times \R$ or $\h^2(-c) \times \R$ and that
the totally umbilical surfaces $N$ are totally geodesic.
\end{proof}

\section{Geodesics for the rotational surfaces}

The metrics (\ref{eq:Cartan-Vranceanu}) are invariant with respect
to the rotations around the $z$- axis and this leads to the study
of the rotational surfaces, of the form
\[
 X(u,v)=\left( f(u)\cos v , f(u)\sin v ,g(u) \right) ,
\]
where $0\leq v<2\pi$ and $f,g$ are real functions with $f>0$.
 In
order to obtain the geodesics of the rotational surfaces we use
the Euler-Lagrange equations
\[
\begin{cases}
 \frac{d}{dt}\Big[2\Big(\frac{f'(u)^{2}}{[1+mf(u)^{2}]^{2}}+g'(u)^{2}\Big)\dot{u}-\frac{\ell f(u)^{2}g'(u)}{1+mf(u)^{2}}\dot{v}\Big] =
 E'(u)\dot{u}^{2}+2F'(u)\dot{u}\dot{v}+G'(u)\dot{v}^{2}
 \\\\
\frac{d}{dt}\Big[2\frac{4f(u)^{2}+\ell^{2}
f(u)^{4}}{4[1+mf(u)^{2}]^{2}}\dot{v}-\frac{\ell
f(u)^{2}g'(u)}{1+mf(u)^{2}}\dot{u}\Big] =0 .
\end{cases}
\]
 We obtain
that (see \cite{[PP]}): \\
 {\bf I. The parallels} $u=u_{0}$ will be geodesics if
\begin{equation}
\frac{f'(u_0)[2+\ell^{2}f(u_0)^{2}-2mf(u_0)^{2}]}{[1+mf(u_0)^{2}]^{3}}=0,
\end{equation}
and thus we have:
\begin{enumerate}
\item[\bf $\bullet$]
the only parallels which are geodesics of the rotational surfaces
for  $\h_3$,  $\widetilde{SL}(2,\R)$,
 $\h^2\times \R$  are, just as in the Euclidean case,  those generated by the rotation of a point of the generating curve
 where the tangent is parallel to the axis of rotation ($f'=0$).
 [For these spaces we have
 $\ell^{2}\geq 2m$.]
\item[\bf $\bullet$]
 For the rotational surfaces of the product manifold $\s^2 \times \R$  the parallels which are geodesics
 have $f'=0$ or $f(u_{0})=\displaystyle \frac{\sqrt{m}}{m}.$
In this case we have $\ell^{2}<2m$.
\item[\bf $\bullet$]
For the rotational surfaces of
$SU(2)$, besides the  parallels with $f'=0$, there are the
parallels for which $f(u_{0})=\sqrt{\frac{2}{2m-l^{2}}}$.
\end{enumerate}
\noindent {\bf II. The meridians}  $v=v_{0}$  are geodesics if
 $$\frac{\ell f(u)^{2}\sqrt{[1+mf(u)^{2}]^{2} -f'(u)^{2}}}{{[1+mf(u)^{2}]^{2}}}=const.$$
It follows that
\begin{enumerate}
\item[\bf $\bullet$]
 if $\ell = 0$ then for the rotational surfaces of the product manifold  $\s^2\times \R$ and $\h^2 \times \R$, as in the euclidian case, all the meridians are geodesics;
\item[\bf $\bullet$]
all the meridians of the cylinders $f(u) = const.$ are geodesics;
\item[\bf $\bullet$]
 if  $\ell  \neq 0$  the  meridians are geodesics for $m \gtreqless 0$ if the function $f$ is
 \[
 f(u) = \frac{\tan(\sqrt{m} u + c)}{\sqrt{m}}, \quad
 \qquad
 f(u) = u , \qquad
 f(u) = \frac{\tanh(\sqrt{-m} u + c)}{\sqrt{-m}}
 \]
 or if $f$ is a solution of the equation
\begin{align}
2 f'(u)& + 4 m f(u)^2 f'(u)+
2 m^2 f(u)^4 f'(u)-2 f'(u)^3 \nonumber \\
&+ 2 m f(u)^2 f'(u)^3 - f(u) f'(u) f''(u)-m f(u)^3 f'(u) f''(u) =
0. \nonumber
\end{align}
\end{enumerate}
 In the particular case of the {\bf cylinder} of equation
\begin{equation}
S(u,v)=\big(a\cos v,a\sin v,u\big),\quad a\in\mathbb{R},
\end{equation}
we obtain the following
\begin{prop}
The geodesics of the cylinder are the curves of equation
$$
\gamma(s)=(a\cos(As+B), a\sin(As+B), Cs+D).
$$
that include:
 \begin{itemize}
\item[\bf $\bullet$]  the meridians;
\item[\bf $\bullet$]   the parallels,
\item[\bf $\bullet$]  the {\it  helices}, that is curves with constant
geodesic curvature and geodesic torsion, analogous of the helices
of $\R^{3}$.
\end{itemize}
\end{prop}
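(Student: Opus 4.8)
The plan is to specialize the general machinery of Section 5 to the cylinder $S(u,v)=(a\cos v,a\sin v,u)$, i.e. to the generating functions $f(u)=a$ (constant) and $g(u)=u$. First I would compute the induced metric on $S$. Since $f'\equiv 0$ and $g'\equiv 1$, substituting into \eqref{eq:Cartan-Vranceanu} the coefficients of the first fundamental form become constants: $E=\frac{f'^2}{[1+mf^2]^2}+g'^2=1$, $F=-\frac{\ell f^2 g'}{1+mf^2}=-\frac{\ell a^2}{1+ma^2}$, and $G=\frac{4f^2+\ell^2 f^4}{4[1+mf^2]^2}=\frac{a^2(4+\ell^2a^2)}{4[1+ma^2]^2}$, all independent of $u$ and $v$. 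Hence $(S,g)$ is a \emph{flat} surface with a constant-coefficient metric, and in suitable affine coordinates it is isometric to a piece of the Euclidean plane; its geodesics are therefore exactly the curves that are affine-linear in the pair $(u,v)$.

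Next I would translate this into the stated parametrization. Because $E'=F'=G'=0$, the Euler--Lagrange system displayed just before the proposition reduces to $\frac{d}{dt}[2E\dot u+2F\dot v]=0$ and $\frac{d}{dt}[2G\dot v+2F\dot u]=0$, i.e. $E\dot u+F\dot v=\text{const}$ and $F\dot u+G\dot v=\text{const}$. Since the matrix $\begin{pmatrix}E&F\\F&G\end{pmatrix}$ is positive definite (it is the first fundamental form of an immersed surface, with $EG-F^2=\frac{a^2}{[1+ma^2]^2}>0$), this forces $\dot u$ and $\dot v$ to be constant. Thus $u(t)=Ct+D$ and $v(t)=At+B$ for constants $A,B,C,D$, which upon composing with $S$ gives precisely $\gamma(s)=(a\cos(As+B),a\sin(As+B),Cs+D)$, after renaming the parameter $s=t$. (One may also reparametrize to arc length; the affine form of the solution is unchanged.)

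Finally I would identify the three listed families as the degenerate and generic cases of $\gamma$. Taking $A=0$ gives $\gamma(s)=(a\cos B,a\sin B,Cs+D)$, a vertical line on the cylinder: this is a meridian $v=v_0$ (consistent with item II, since a cylinder has $f=\text{const}$ so all meridians are geodesics). Taking $C=0$ gives $\gamma(s)=(a\cos(As+B),a\sin(As+B),D)$, a horizontal circle: this is a parallel $u=u_0$, and indeed the parallel condition in item I is $f'(u_0)[\cdots]=0$, satisfied here because $f'\equiv 0$. When both $A\neq 0$ and $C\neq 0$ the curve winds around the cylinder at constant slope; its tangent vector has constant components in the frame $\{E_1,E_2,E_3\}$ along the curve, so its geodesic curvature and geodesic torsion (computed from the ambient Levi--Civita connection via \eqref{baseorto}) are constant, exactly as for a circular helix in $\R^3$. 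The only mild obstacle is the last point: one must verify that ``constant-coefficient affine curve on a flat surface'' genuinely yields constant geodesic curvature \emph{and} constant geodesic torsion as curves in the ambient C-V space, which requires a short computation of $\overline\nabla_{\dot\gamma}\dot\gamma$ and of the normal component; but since all the relevant structure functions are evaluated at $f=a$ and hence constant, this reduces to linear algebra with constant matrices rather than to solving differential equations.
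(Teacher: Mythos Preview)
The paper states this proposition without proof (the surrounding material is attributed to \cite{[PP]}), so there is no in-paper argument to compare against. Your approach---specialize the Euler--Lagrange system to $f\equiv a$, $g(u)=u$, observe that $E,F,G$ become constants, and conclude that $\dot u,\dot v$ are constant---is correct and is exactly the natural route suggested by the paper's Section~5 setup. One cosmetic slip: from the displayed Euler--Lagrange equations one reads off $2F=-\dfrac{\ell f^2 g'}{1+mf^2}$, so your $F$ is off by a factor of~$2$; the correct value $F=-\dfrac{\ell a^2}{2(1+ma^2)}$ is in fact what makes your claimed determinant $EG-F^2=\dfrac{a^2}{(1+ma^2)^2}$ come out right. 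This does not affect the argument, since all you need is that $E,F,G$ are constant and the matrix is nondegenerate.

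The one place where your justification does not hold is the helix bullet. The components of $\dot\gamma$ in the frame $\{E_1,E_2,E_3\}$ are \emph{not} constant: a direct computation gives $\dot\gamma=\dfrac{-aA\sin(As+B)}{1+ma^2}\,E_1+\dfrac{aA\cos(As+B)}{1+ma^2}\,E_2+\Bigl(C-\dfrac{\ell a^2 A}{2(1+ma^2)}\Bigr)E_3$, and the first two components oscillate. The clean reason the geodesic curvature and torsion are constant is different: the helix $\gamma$ is the orbit through $(a\cos B,a\sin B,D)$ of the one-parameter isometry group generated by the Killing field $AR+CZ=A(-y\,\partial_x+x\,\partial_y)+C\,\partial_z$, hence every ambient differential invariant of $\gamma$ is constant along it. Replacing your frame-component claim with this homogeneity argument closes the gap; the rest of your proof is fine.
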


\strut\hfill Universit\`a degli Studi di Cagliari,\\
\strut\hfill Dipartimento di Matematica e Informatica\,\\
\strut\hfill Via Ospedale 72, 09124 Cagliari, ITALIA\\

\strut\hfill piu@unica.it\\
\strut\hfill profirmanuela@yahoo.com
\bigskip

\end{document}